\newtheorem{theorem}{Theorem}[section]
\numberwithin{equation}{section}
\begin{document}

\title[Coarse cohomology types of pure metacyclic fields]{Coarse cohomology types of pure metacyclic fields}

\address{Daniel C. Mayer\\Naglergasse 53\\8010 Graz\\Austria}
\email{algebraic.number.theory@algebra.at}
\urladdr{http://www.algebra.at}


\date{December 24, 2018}

\maketitle


\section{Introduction and Foundations}
\label{s:Intro}
\noindent
Let \(p\) be an odd prime number,
\(D\ge 2\) a \(p\)-th power free integer,
and \(\zeta_p\) be a primitive \(p\)-th root of unity.
The \textit{coarse cohomology types} of pure metacyclic fields \(N=\mathbb{Q}(\zeta_p,\sqrt[p]{D})\),
which are cyclic Kummer extensions of the cyclotomic field \(K=\mathbb{Q}(\zeta_p)\)
with relative automorphism group \(G=\mathrm{Gal}(N/K)=\langle\sigma\rangle\),
are based on the Galois cohomology of the unit group \(U_N\) viewed as a \(G\)-module.
The \textit{primary invariant} is the group \(H^0(G,U_N)\simeq U_K/N_{N/K}(U_N)\)
of order \(p^U\) with \(0\le U\le\frac{p-1}{2}\)
which is related to the group \(H^1(G,U_N)\simeq (U_N\cap\ker(N_{N/K}))/U_N^{1-\sigma}\)
of order \(p^{P}\) by the Takagi/Hasse/Iwasawa Theorem on the Herbrand quotient of \(U_N\):
\begin{equation}
\label{eqn:HerbrandQuot}
\#H^1(G,U_N)=\#H^0(G,U_N)\cdot\lbrack N:K\rbrack, \text{ respectively } P=U+1. 
\end{equation}
The \textit{secondary invariant} is a natural decomposition
of the group \(H^1(G,U_N)\simeq\mathcal{P}_{N/K}/\mathcal{P}_K\)
of \textit{primitive ambiguous principal ideals} of \(N/K\),
which can be viewed as principal ideals dividing the relative different \(\mathfrak{D}_{N/K}\)
and are therefore called \textit{differential principal factors} (DPF) of \(N/K\):
\begin{equation}
\label{eqn:NaturalDecomp}
\mathcal{P}_{N/K}/\mathcal{P}_K\simeq\mathcal{P}_{L/\mathbb{Q}}/\mathcal{P}_{\mathbb{Q}}
\times\left((\mathcal{P}_{N/K}/\mathcal{P}_K)\cap\ker(N_{N/L})\right),
\text{ respectively } U+1=A+R,
\end{equation}
where \(L=\mathbb{Q}(\sqrt[p]{D})\) denotes the real non-Galois pure subfield of degree \(p\) of \(N\).
The subgroup of \textit{absolute} DPF, \(\mathcal{P}_{L/\mathbb{Q}}/\mathcal{P}_{\mathbb{Q}}\), is of order \(p^A\),
and the subgroup of \textit{relative} DPF (the norm kernel), \((\mathcal{P}_{N/K}/\mathcal{P}_K)\cap\ker(N_{N/L})\), is of order \(p^R\)
\cite{Ma2a,Ma2c}.
We present \(p=7\) in comparison to \(p\in\lbrace 3,5\rbrace\).


\section{Pure Septic Fields}
\label{s:Septic}
\noindent
For \textit{pure septic} fields \(L=\mathbb{Q}(\sqrt[7]{D})\)
and their Galois closure \(N=\mathbb{Q}(\zeta_7,\sqrt[7]{D})\),
that is the case \(p=7\),
the \textit{coarse} classification of \(N\) according to the invariants \(U\) and \(A\) alone is
illustrated in Fig.
\ref{fig:Septic}:
The coarse types are
\(\alpha\), \(\beta\), \(\gamma\), \(\delta\) with \(U=3\),
\(\varepsilon\), \(\zeta\), \(\eta\) with \(U=2\),
\(\vartheta\), \(\iota\) with \(U=1\), and
\(\kappa\) with \(U=0\).
The possibility that the primitive seventh root of unity \(\zeta_7\)
occurs as the relative norm \(N_{N/K}(Z)\) of a unit \(Z\in U_N\)
will cause a splitting of all types with \(1\le U\le 2\),
similar to the splitting into \(\delta\)/\(\zeta\) and \(\varepsilon\)/\(\eta\)
in the pure quintic case of \S\
\ref{s:Quintic}.
Due to the existence of radicals in the pure septic field,
the \(\mathbb{F}_7\)-dimension \(A\) of the vector space of \textit{absolute} DPF
is at least one: \(1\le A\le 4\).


\begin{figure}[ht]
\caption{Classification of pure septic fields}
\label{fig:Septic}

{\tiny

\setlength{\unitlength}{1.0cm}
\begin{picture}(15,6)(-11,-10)




\put(-2,-4.8){\makebox(0,0)[cb]{\(U\)}}
\put(-2,-6){\vector(0,1){1}}
\put(-2,-9){\line(0,1){3}}
\multiput(-2.1,-9)(0,1){4}{\line(1,0){0.2}}

\put(-2.2,-6){\makebox(0,0)[rc]{\(3\)}}
\put(-2.2,-7){\makebox(0,0)[rc]{\(2\)}}
\put(-2.2,-8){\makebox(0,0)[rc]{\(1\)}}
\put(-2.2,-9){\makebox(0,0)[rc]{\(0\)}}

\put(3.2,-10){\makebox(0,0)[lc]{\(A\)}}
\put(2,-10){\vector(1,0){1}}
\put(-1,-10){\line(1,0){3}}
\multiput(-1,-10.1)(1,0){4}{\line(0,1){0.2}}

\put(-1,-10.2){\makebox(0,0)[ct]{\(1\)}}
\put(0,-10.2){\makebox(0,0)[ct]{\(2\)}}
\put(1,-10.2){\makebox(0,0)[ct]{\(3\)}}
\put(2,-10.2){\makebox(0,0)[ct]{\(4\)}}


\put(-1,-6){\circle*{0.2}}
\put(-1,-5.8){\makebox(0,0)[cb]{\(\alpha\)}}
\put(0,-6){\circle*{0.2}}
\put(0,-5.8){\makebox(0,0)[cb]{\(\beta\)}}
\put(1,-6){\circle*{0.2}}
\put(1,-5.8){\makebox(0,0)[cb]{\(\gamma\)}}
\put(2,-6){\circle{0.2}}
\put(2,-5.8){\makebox(0,0)[cb]{\(\delta\)}}

\put(-1,-7){\circle*{0.2}}
\put(-1,-6.8){\makebox(0,0)[cb]{\(\varepsilon\)}}
\put(0,-7){\circle*{0.2}}
\put(0,-6.8){\makebox(0,0)[cb]{\(\zeta\)}}
\put(1,-7){\circle{0.2}}
\put(1,-6.8){\makebox(0,0)[cb]{\(\eta\)}}

\put(-1,-7.9){\circle*{0.2}}
\put(-1,-7.7){\makebox(0,0)[cb]{\(\vartheta\)}}
\put(0,-7.9){\circle{0.2}}
\put(0,-7.7){\makebox(0,0)[cb]{\(\iota\)}}

\put(-1,-9){\circle{0.2}}
\put(-1,-9.2){\makebox(0,0)[ct]{\(\kappa\)}}


\end{picture}
}
\end{figure}
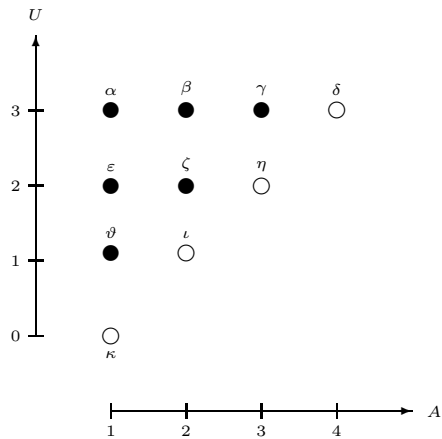


\section{Pure Quintic Fields}
\label{s:Quintic}
\noindent
For \textit{pure quintic} fields \(L=\mathbb{Q}(\sqrt[5]{D})\)
and their Galois closure \(N=\mathbb{Q}(\zeta_5,\sqrt[5]{D})\),
that is the case \(p=5\),
the \textit{coarse} classification of \(N\) according to the invariants \(U\) and \(A\) alone is closely related
to the classification of \textit{totally real dihedral} fields by Nicole Moser
\cite[Thm. III.5, p. 62]{Mo},
as illustrated in Figure
\ref{fig:MoserExtendedQuintic}:
The coarse types \(\alpha\), \(\beta\), \(\gamma\), \(\delta\), \(\varepsilon\)
are completely analogous in both cases.
Additional types \(\zeta\), \(\eta\), \(\vartheta\) are required for pure quintic fields,
because there arises the possibility that the primitive fifth root of unity \(\zeta_5\)
occurs as the relative norm \(N_{N/K}(Z)\) of a unit \(Z\in U_N\).
Due to the existence of radicals in the pure quintic case,
the \(\mathbb{F}_p\)-dimension \(A\) of the vector space of \textit{absolute} DPF
exceeds the corresponding dimension for totally real dihedral fields by one
\cite{Ma0,Ma2b,Ma2}.


\begin{figure}[ht]
\caption{Classification of totally real dihedral and pure quintic fields}
\label{fig:MoserExtendedQuintic}

{\tiny

\setlength{\unitlength}{1.0cm}
\begin{picture}(15,5)(-11,-10)




\put(-9,-5.8){\makebox(0,0)[cb]{\(U\)}}
\put(-9,-7){\vector(0,1){1}}
\put(-9,-8){\line(0,1){1}}
\multiput(-9.1,-8)(0,1){2}{\line(1,0){0.2}}

\put(-9.2,-7){\makebox(0,0)[rc]{\(1\)}}
\put(-9.2,-8){\makebox(0,0)[rc]{\(0\)}}

\put(-4.8,-10){\makebox(0,0)[lc]{\(A\)}}
\put(-6,-10){\vector(1,0){1}}
\put(-8,-10){\line(1,0){2}}
\multiput(-8,-10.1)(1,0){3}{\line(0,1){0.2}}

\put(-8,-10.2){\makebox(0,0)[ct]{\(0\)}}
\put(-7,-10.2){\makebox(0,0)[ct]{\(1\)}}
\put(-6,-10.2){\makebox(0,0)[ct]{\(2\)}}


\put(-8,-7){\circle*{0.2}}
\put(-8,-6.8){\makebox(0,0)[cb]{\(\alpha\)}}
\put(-7,-7){\circle*{0.2}}
\put(-7,-6.8){\makebox(0,0)[cb]{\(\beta\)}}
\put(-6,-7){\circle{0.2}}
\put(-6,-6.8){\makebox(0,0)[cb]{\(\gamma\)}}

\put(-8,-8){\circle*{0.2}}
\put(-8,-7.8){\makebox(0,0)[cb]{\(\delta\)}}
\put(-7,-8){\circle{0.2}}
\put(-7,-7.8){\makebox(0,0)[cb]{\(\varepsilon\)}}




\put(-2,-5.8){\makebox(0,0)[cb]{\(U\)}}
\put(-2,-7){\vector(0,1){1}}
\put(-2,-9){\line(0,1){2}}
\multiput(-2.1,-9)(0,1){3}{\line(1,0){0.2}}

\put(-2.2,-7){\makebox(0,0)[rc]{\(2\)}}
\put(-2.2,-8){\makebox(0,0)[rc]{\(1\)}}
\put(-2.2,-9){\makebox(0,0)[rc]{\(0\)}}

\put(2.2,-10){\makebox(0,0)[lc]{\(A\)}}
\put(1,-10){\vector(1,0){1}}
\put(-1,-10){\line(1,0){2}}
\multiput(-1,-10.1)(1,0){3}{\line(0,1){0.2}}

\put(-1,-10.2){\makebox(0,0)[ct]{\(1\)}}
\put(0,-10.2){\makebox(0,0)[ct]{\(2\)}}
\put(1,-10.2){\makebox(0,0)[ct]{\(3\)}}


\put(-1,-7){\circle*{0.2}}
\put(-1,-6.8){\makebox(0,0)[cb]{\(\alpha\)}}
\put(0,-7){\circle*{0.2}}
\put(0,-6.8){\makebox(0,0)[cb]{\(\beta\)}}
\put(1,-7){\circle{0.2}}
\put(1,-6.8){\makebox(0,0)[cb]{\(\gamma\)}}

\put(-1,-7.9){\circle*{0.2}}
\put(-1,-7.7){\makebox(0,0)[cb]{\(\delta\)}}
\put(0,-7.9){\circle{0.2}}
\put(0,-7.7){\makebox(0,0)[cb]{\(\varepsilon\)}}
\put(-1,-8.1){\circle*{0.2}}
\put(-1,-8.3){\makebox(0,0)[ct]{\(\zeta\)}}
\put(0,-8.1){\circle{0.2}}
\put(0,-8.3){\makebox(0,0)[ct]{\(\eta\)}}

\put(-1,-9){\circle{0.2}}
\put(-1,-9.2){\makebox(0,0)[ct]{\(\vartheta\)}}


\end{picture}
}
\end{figure}
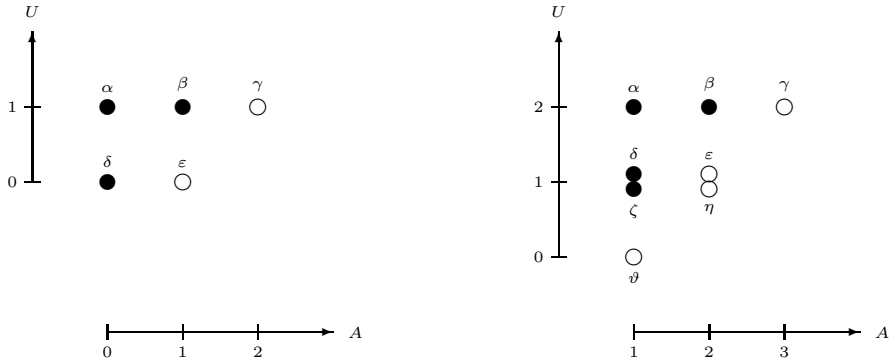


\section{Pure Cubic Fields}
\label{s:Cubic}
\noindent
For \textit{pure cubic} fields \(L=\mathbb{Q}(\sqrt[3]{D})\)
and their Galois closure \(N=\mathbb{Q}(\zeta_3,\sqrt[3]{D})\),
that is the case \(p=3\),
the \textit{coarse} classification of \(N\) according to the invariants \(U\) and \(A\) alone is closely related
to the classification of \textit{simply real dihedral} fields by Nicole Moser
\cite[Dfn. III.1 and Prop. III.3, p. 61]{Mo},
as illustrated in Figure
\ref{fig:MoserExtendedCubic}:
The coarse types \(\alpha\) and \(\beta\)
are completely analogous in both cases.
The additional type \(\gamma\) is required for pure cubic fields,
because there arises the possibility that the primitive cube root of unity \(\zeta_3\)
occurs as the relative norm \(N_{N/K}(Z)\) of a unit \(Z\in U_N\).
Due to the existence of radicals in the pure cubic case,
the \(\mathbb{F}_p\)-dimension \(A\) of the vector space of \textit{absolute} DPF
exceeds the corresponding dimension for simply real dihedral fields by one
\cite{AMITA,AMI,Ma}.


\begin{figure}[ht]
\caption{Classification of simply real dihedral and pure cubic fields}
\label{fig:MoserExtendedCubic}

{\tiny

\setlength{\unitlength}{1.0cm}
\begin{picture}(15,4)(-11,-9)




\put(-9,-5.8){\makebox(0,0)[cb]{\(U\)}}
\put(-9,-7){\vector(0,1){1}}
\put(-9,-7){\line(0,1){0}}
\multiput(-9.1,-7)(0,1){1}{\line(1,0){0.2}}

\put(-9.2,-7){\makebox(0,0)[rc]{\(0\)}}

\put(-5.8,-9){\makebox(0,0)[lc]{\(A\)}}
\put(-7,-9){\vector(1,0){1}}
\put(-8,-9){\line(1,0){1}}
\multiput(-8,-9.1)(1,0){2}{\line(0,1){0.2}}

\put(-8,-9.2){\makebox(0,0)[ct]{\(0\)}}
\put(-7,-9.2){\makebox(0,0)[ct]{\(1\)}}


\put(-8,-7){\circle*{0.2}}
\put(-8,-6.8){\makebox(0,0)[cb]{\(\alpha\)}}
\put(-7,-7){\circle{0.2}}
\put(-7,-6.8){\makebox(0,0)[cb]{\(\beta\)}}




\put(-2,-5.8){\makebox(0,0)[cb]{\(U\)}}
\put(-2,-7){\vector(0,1){1}}
\put(-2,-8){\line(0,1){1}}
\multiput(-2.1,-8)(0,1){2}{\line(1,0){0.2}}

\put(-2.2,-7){\makebox(0,0)[rc]{\(1\)}}
\put(-2.2,-8){\makebox(0,0)[rc]{\(0\)}}

\put(1.2,-9){\makebox(0,0)[lc]{\(A\)}}
\put(0,-9){\vector(1,0){1}}
\put(-1,-9){\line(1,0){1}}
\multiput(-1,-9.1)(1,0){2}{\line(0,1){0.2}}

\put(-1,-9.2){\makebox(0,0)[ct]{\(1\)}}
\put(0,-9.2){\makebox(0,0)[ct]{\(2\)}}


\put(-1,-7){\circle{0.2}}
\put(-1,-6.8){\makebox(0,0)[cb]{\(\alpha\)}}
\put(0,-7){\circle{0.2}}
\put(0,-6.8){\makebox(0,0)[cb]{\(\beta\)}}

\put(-1,-8){\circle{0.2}}
\put(-1,-8.2){\makebox(0,0)[ct]{\(\gamma\)}}


\end{picture}
}
\end{figure}
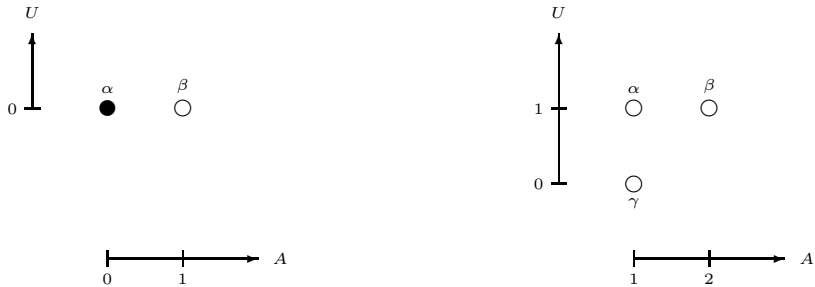


\section{Common Features}
\label{s:Common}
\noindent
In all sections, \S\S\
\ref{s:Septic},
\ref{s:Quintic}, and
\ref{s:Cubic},
the symbol \(\bullet\) indicates a \textit{fine structure} splitting of the
remaining \(\mathbb{F}_p\)-dimension \(R=U+1-A\) into \textit{relative} DPF, and either \textit{capitulation} or \textit{intermediate} DPF.


\section{Conclusion}
\label{s:Conclusion}
\noindent
The purpose of this brief note was to present the fundamental ideas for
a classification of pure \textit{septic} fields \(L=\mathbb{Q}(\sqrt[7]{D})\)
and their Galois closures \(N=\mathbb{Q}(\zeta_7,\sqrt[7]{D})\).
Figure
\ref{fig:Septic}
illustrates the increase of complexity
in comparison with the pure quintic situation in Figure
\ref{fig:MoserExtendedQuintic}:
we shall have \(10\) \textit{coarse} DPF types
instead of only \(6\) (neglecting the quintic splitting of
\(\delta\)/\(\zeta\) and \(\varepsilon\)/\(\eta\)).

Our theory of \textit{fine} DPF types, as developed in
\cite{Ma0,Ma2a}
for \(p=5\),
showed the crucial impact of \textit{splitting} prime divisors
of the conductor \(f\) of \(N/K\)
on the possibility of DPF types
with non-maximal extent of absolute principal factorizations \(A<U+1\),
which will appear in aggravated form for \(p\ge 7\).

On the other hand,
splitting prime divisors of \(f\) have been proved
to enforce non-trivial \(p\)-class numbers of \(L\) and \(N\):
according to Ishida
\cite{Is},
a prime divisor \(\ell\equiv +1\,(\mathrm{mod}\,p)\) of \(f\)
implies \(p\mid h_L\) and \(p\mid h_N\), for any \(p\ge 3\).
Such a prime divisor \(\ell\) splits completely in the cyclotomic field \(K\),
that is, into \(p-1\) prime ideals.
More recently, Kobayashi
\cite{Ky1,Ky2}
has proved that a prime divisor \(\ell\equiv -1\,(\mathrm{mod}\,5)\) of \(f\)
implies \(5\mid h_L\) and \(5\mid h_N\)
and he conjectures the truth of this behavior for \(p\ge 7\).
Such a prime divisor \(\ell\)
splits into \(\frac{p-1}{2}\) prime ideals of \(K\).
Therefore, we were surprised that other splitting prime divisors \(\ell\) of \(f\),
whose occurrence starts with \(p=7\),
do not exert such severe constraints on class numbers,
and we conclude with the following interesting proven phenomenon.

\begin{theorem}
\label{thm:Main}
Let  \(L=\mathbb{Q}(\sqrt[7]{D})\) be a pure septic field
with \(7\)-th power free radicand \(D>1\)
and Galois closure \(N=\mathbb{Q}(\zeta_7,\sqrt[7]{D})\).
If \(D=\ell\equiv 2,4\,(\mathrm{mod}\,7)\) is a prime radicand,
then it \textbf{splits} into \(\frac{7-1}{3}=2\) prime ideals of \(K=\mathbb{Q}(\zeta_7)\).
If the radicand belongs to the range \(2\le D<200\),
then \(\ell\) causes \textbf{relative principal factorizations} in the norm kernel
\((\mathcal{P}_{N/K}/\mathcal{P}_K)\cap\ker(N_{N/L})\),
but \(h_L\) and \(h_N\) are \textbf{not divisible by} \(7\).
\end{theorem}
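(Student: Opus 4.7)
The plan is to treat the two assertions of the theorem separately: the splitting statement follows from standard cyclotomic decomposition theory, while the claims about relative principal factorisations and the non-divisibility of the class numbers will be reduced to a case-by-case computational verification.

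For the splitting I first observe that the factorisation of an unramified rational prime $\ell$ in the cyclotomic field $K = \mathbb{Q}(\zeta_7)$ has the form $\ell\mathcal{O}_K = \mathfrak{l}_1\cdots\mathfrak{l}_g$ with common residue class degree $f$, where $f$ is the order of the class of $\ell$ in the cyclic group $(\mathbb{Z}/7\mathbb{Z})^\times$ of order $6$, and $fg = 6$. The residue classes of order exactly $3$ are the nontrivial solutions of $x^3 \equiv 1 \pmod 7$, namely $x \equiv 2, 4$. Hence $\ell \equiv 2, 4 \pmod 7$ forces $f = 3$ and $g = (7-1)/3 = 2$, as asserted; since the discriminant of $\mathbb{Q}(\zeta_7)$ is a power of $7$, no prime $\ell \ne 7$ in the considered residue classes ramifies.

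For the second assertion I would proceed by exhaustive enumeration. The primes $\ell$ in the range $2 \le \ell < 200$ with $\ell \equiv 2, 4 \pmod 7$ form a short explicit list of sixteen primes, and the two claims are to be verified for each one. For a given prime radicand $D = \ell$ I would perform the following computation in PARI/GP or MAGMA: construct $L = \mathbb{Q}(\sqrt[7]{\ell})$, $K = \mathbb{Q}(\zeta_7)$ and the compositum $N$; compute $h_L$ and $h_N$ and confirm $7 \nmid h_L h_N$; compute a fundamental system of units of $N$ together with the root of unity $\zeta_7$; determine $H^0(G,U_N) \simeq U_K / N_{N/K}(U_N)$ to extract the primary invariant $U$, and use the Herbrand relation (\ref{eqn:HerbrandQuot}) to obtain $P = U + 1$; finally, identify the subgroup of absolute DPF in the natural decomposition (\ref{eqn:NaturalDecomp}) by intersecting $\mathcal{P}_{N/K}/\mathcal{P}_K$ with the image of $\mathcal{P}_{L/\mathbb{Q}}/\mathcal{P}_{\mathbb{Q}}$, read off $A$, and hence $R = U+1-A$. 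The goal is to exhibit $R \ge 1$ in every case, witnessed by an explicit relative DPF supported at the two prime ideals of $K$ above $\ell$.

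The main obstacle is the numerical handling of the degree-$42$ field $N$: unit-group and class-number computations for such a large field are expensive and, in general-purpose software, typically conditional on GRH unless one invokes the slow certification routines. A pragmatic strategy is to accept the GRH-conditional output and then certify unconditionally only the finite list of sixteen cases required. Once reliable unit data are available, extracting $U$, $A$ and $R$ is pure linear algebra over $\mathbb{F}_7$ on the cohomology groups $H^0(G,U_N)$ and $H^1(G,U_N)$ and is entirely routine. The expected — and somewhat striking — outcome is that the splitting of $\ell$ into two primes of $K$ invariably produces a nontrivial relative DPF yet does not force $7$ to divide $h_L$ or $h_N$, in marked contrast to the dichotomies of Ishida ($\ell \equiv +1 \pmod p$) and Kobayashi ($\ell \equiv -1 \pmod 5$), where analogous splitting behaviour does force $p$-divisibility of the class numbers.
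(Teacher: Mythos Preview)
Your proposal is correct and follows essentially the same approach as the paper: the paper's own proof is simply ``By direct investigation with the aid of the computer algebra system Magma'', followed by the explicit list of the sixteen prime radicands $D\in\{2,11,23,37,53,67,79,107,109,137,149,151,163,179,191,193\}$. If anything you supply more than the paper does, namely the clean order-of-$\ell$-in-$(\mathbb{Z}/7\mathbb{Z})^\times$ argument for the splitting and an outline of how the invariants $U$, $A$, $R$ would actually be extracted from the Magma data.
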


\begin{proof}
By direct investigation with the aid of the computer algebra system Magma
\cite{MAGMA}.
Explicitly, the radicands are 
\(D\in\lbrace 2,11,23,37,53,67,79,107,109,137,149,151,163,179,191,193\rbrace\). 
\end{proof}


\section{Acknowledgements}
\label{s:Thanks}

\noindent
We gratefully acknowledge that our research was supported by the Austrian Science Fund (FWF):
projects J 0497-PHY and P 26008-N25.



\end{document}